\newtheorem{thm}{Theorem}
\newtheorem{lem}{Lemma}
\newtheorem{definition}{Definition}
\begin{document}
\title[New Integral Inequalities]{New Integral Inequalities through Generalized Convex Functions with Application}
\author[M. Muddassar]{Muhammad Muddassar}
\address{Department of Mathematics, University of Engineering and Technology, Taxila, Pakistan}
\email{malik.muddassar@gmail.com}
\author[A. Ali]{Ahsan Ali}
\address{Department of Electronic Engineering, University of Engineering and Technology, Taxila, Pakistan}
\email{ahsan.ali@uettaxila.edu.pk}
\date{\today}
\subjclass[2000]{ 26A51, 26D15, 26D10}
\keywords{Convex functions, Generalized Convexity, Hermite-Hadamard inequality, Jensen's inequality, H\"{o}lder inequality,  Power-mean inequality, Special means}

\begin{abstract}
In this paper, we establish various inequalities for some mappings that are linked with the illustrious Hermite-Hadamard integral inequality for mappings whose absolute values belong to the class ${K_{m, 1}^{\alpha, s}}$ and ${K_{m, 2}^{\alpha, s}}$.
\end{abstract}
\maketitle
\section{Introduction}\label{Sec 1}
The role of mathematical inequalities within the mathematical branches as well as their enormous applications can not be underestimated. The appearance of the new mathematical inequality often puts on firm foundation for the heuristic algorithms and procedures used in applied sciences. Among others, one of the main inequality which gives us an explicit error bounds in the trapezoidal and midpoint rules of a smooth function, called Hermit-Hadamard's inequality, is defined as ~\cite[p. 53]{r9}:
\begin{equation}\label{1}
    f\left(\frac{a+b}{2}\right)\,\leq\,\frac{1}{b-a}\int_a^b\,f(x)\,dx\,\leq\ \frac{f(a)+f(b)}{2},
\end{equation}
where $f:[a,b]\rightarrow\mathbb{R}$ is a convex function. Both inequalities hold in the reversed direction for $f$ to be concave.\\
We note that Hermit-Hadamard's inequality  $(\ref{1})$ may be regarded as a refinement of the concept of convexity and it follows easily from Jensen's inequality. Inequality $(\ref{1})$ has received renewed attention in recent years and a variety of refinements and generalizations can be found in many Articles, Books, Volumes and Journals.
The notion of quasi-convex functions generalizes the notion of convex functions. More precisely, a function $f : [a, b] \rightarrow \mathbb{R}$ is said to be quasi-convex on $[a, b]$ if
$$f(tx+(1-t)y) \leq \max\{f(x), f(y)\}$$
holds for any $x, y \in [a, b]$ and $t \in [0, 1]$. Clearly, any convex function is a quasi-convex function. Furthermore, there exist quasi-convex functions which are not convex (see ~\cite{r4a}).
In ~\cite{r8}, \"{O}zdemir et al. established several integral inequalities respecting some kinds of convexity. Especially, they discussed the following result connecting with quasi-convex functions:
\begin{thm}\label{T1}
Let $f : [a, b] \rightarrow \mathbb{R}$ be continuous on $[a, b]$ such that $f \in L([a, b])$, with $0 \leq a < b < \infty$. If $f$ is quasi-convex on $[a, b]$, then for some fixed $p, q > 0$, we have
\begin{equation*}
\int_a^b (x-a)^p(b-x)^q f(x)dx = (b-a)^{p+q+1} \beta(p+1, q+1) \max\{f(a), f(b)\}
\end{equation*}
where $\beta(x, y)$ is the Euler Beta function.
\end{thm}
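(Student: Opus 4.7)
The plan is to reduce the weighted integral on $[a,b]$ to a standard Beta-type integral on $[0,1]$ via a linear change of variable, and then apply the quasi-convexity hypothesis pointwise. (Note that the displayed $=$ in the statement must be read as $\leq$; this is clear from context, since equality cannot hold for generic quasi-convex $f$.)

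First, I would set $x = ta + (1-t)b$ for $t \in [0,1]$, so that $x-a = (1-t)(b-a)$, $b-x = t(b-a)$, and $dx = -(b-a)\,dt$. Substituting and flipping the limits gives
\begin{equation*}
\int_a^b (x-a)^p (b-x)^q f(x)\,dx = (b-a)^{p+q+1}\int_0^1 t^q(1-t)^p f\bigl(ta+(1-t)b\bigr)\,dt.
\end{equation*}
This is purely a computation, but it is the key structural step: it separates the power weights (which become a Beta kernel on $[0,1]$) from the function values of $f$ along the chord between $a$ and $b$.

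Second, I would invoke quasi-convexity. Since $ta+(1-t)b$ is a convex combination of the endpoints, the defining inequality gives $f(ta+(1-t)b) \leq \max\{f(a),f(b)\}$ for every $t \in [0,1]$. Inserting this bound and pulling the constant $\max\{f(a),f(b)\}$ outside reduces the problem to estimating $\int_0^1 t^q (1-t)^p\,dt$.

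Third, I would identify the remaining integral as the Euler Beta function, $\int_0^1 t^q(1-t)^p\,dt = \beta(q+1,p+1) = \beta(p+1,q+1)$, yielding
\begin{equation*}
\int_a^b (x-a)^p (b-x)^q f(x)\,dx \leq (b-a)^{p+q+1}\,\beta(p+1,q+1)\,\max\{f(a),f(b)\},
\end{equation*}
as required. No real obstacle arises in this argument; the only thing to watch is the bookkeeping of the $(b-a)$ factors and the orientation of the substitution, both of which are routine. The whole proof is essentially an instance of the general principle that any pointwise bound by $\max\{f(a),f(b)\}$ on the chord yields a weighted-integral bound with constant given by the $L^1$-norm of the weight.
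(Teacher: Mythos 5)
Your proof is correct and follows exactly the route the paper itself relies on: your change of variable is precisely the paper's Lemma 1, and the subsequent pointwise quasi-convexity bound followed by the Beta-function evaluation is the same scheme the paper uses to prove its own Theorems 4--6. (The paper only quotes Theorem 1 from the literature without supplying a proof, and you are right that the displayed $=$ must be read as $\leq$.)
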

Recently, Liu in ~\cite{r6}  established some new integral inequalities for quasiconvex functions as follows:
\begin{thm}\label{T2}
Let $f : [a, b] \rightarrow  \mathbb{R}$ be continuous on $[a, b]$ such that $f \in L([a, b])$, with $0 \leq a < b < \infty$ and let $k > 1$. If $|f|^{\frac{k}{k-1}}$ is quasi-convex on $[a, b]$, for some fixed $p, q > 0$, then
\begin{eqnarray*}
&&\!\!\!\!\!\!\!\!\!\!\!\!\!\!\!\! \int_a^b (x-a)^p(b-x)^q f(x)dx = (b-a)^{p+q+1} \left(\beta(kp+1, kq+1)\right)^{\frac{1}{k}}\times\\
&& \indent\indent\indent\indent\indent\indent\indent\indent\indent\indent\indent\indent\left(\max\{\left|f(a)\right|^{\frac{k}{k-1}}, \left|f(b)\right|^{\frac{k}{k-1}}\}\right)^{\frac{k-1}{k}}
\end{eqnarray*}
\end{thm}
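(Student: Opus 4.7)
The plan is to bound the absolute value of the integral using H\"older's inequality with conjugate exponents $k$ and $\tfrac{k}{k-1}$, then handle the two resulting factors separately: a pure power integral that reduces to a Beta function, and an integral of $|f|^{k/(k-1)}$ that is controlled via quasi-convexity.

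First, I would write
\[
\left|\int_a^b (x-a)^p(b-x)^q f(x)\,dx\right| \le \int_a^b (x-a)^p(b-x)^q |f(x)|\,dx,
\]
and split the integrand as $\bigl[(x-a)^p(b-x)^q\bigr]\cdot|f(x)|$. Applying H\"older with exponents $k$ and $\tfrac{k}{k-1}$ gives
\[
\int_a^b (x-a)^p(b-x)^q |f(x)|\,dx \le \left(\int_a^b (x-a)^{kp}(b-x)^{kq}\,dx\right)^{\!1/k}\!\left(\int_a^b |f(x)|^{k/(k-1)}\,dx\right)^{\!(k-1)/k}.
\]

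Next I would evaluate the first factor by the substitution $x=a+(b-a)t$, which turns it into
\[
(b-a)^{kp+kq+1}\int_0^1 t^{kp}(1-t)^{kq}\,dt = (b-a)^{kp+kq+1}\beta(kp+1,kq+1).
\]
For the second factor, since $|f|^{k/(k-1)}$ is quasi-convex on $[a,b]$, for every $x\in[a,b]$ one has $|f(x)|^{k/(k-1)}\le \max\{|f(a)|^{k/(k-1)},|f(b)|^{k/(k-1)}\}$, so
\[
\int_a^b |f(x)|^{k/(k-1)}\,dx \le (b-a)\max\{|f(a)|^{k/(k-1)},|f(b)|^{k/(k-1)}\}.
\]

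Finally I would combine the two estimates; the $(b-a)$ exponents add to $\tfrac{kp+kq+1}{k}+\tfrac{k-1}{k}=p+q+1$, giving exactly the claimed bound (with $\le$ in place of the stated $=$, presumably a typo). There is no real obstacle here: the only subtlety is bookkeeping of the exponents on $(b-a)$ and ensuring the H\"older split is done on $(x-a)^p(b-x)^q$ versus $f(x)$ rather than, say, pulling a factor inside $|f|^{k/(k-1)}$, since only that split produces the Beta function with parameters $kp+1,kq+1$ that appears in the statement.
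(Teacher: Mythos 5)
Your proof is correct and follows essentially the same route as the paper: a H\"older split of $(x-a)^p(b-x)^q$ against $|f|$ with exponents $k$ and $\tfrac{k}{k-1}$, a Beta-function evaluation of the power integral, and the pointwise bound $|f(x)|^{k/(k-1)}\le\max\{|f(a)|^{k/(k-1)},|f(b)|^{k/(k-1)}\}$ from quasi-convexity. The only cosmetic difference is that the paper first changes variables to $[0,1]$ via Lemma \ref{l1} and then applies H\"older there, whereas you apply H\"older on $[a,b]$ and substitute afterwards; you are also right that the stated ``$=$'' should be ``$\le$''.
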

\begin{thm}\label{T3}
Let $f : [a, b] \rightarrow  \mathbb{R}$ be continuous on $[a, b]$ such that $f \in L([a, b])$, with $0 \leq a < b < \infty$ and let $l \geq 1$. If $|f|^l$ is quasi-convex on $[a, b]$, for some fixed $p, q > 0$, then
\begin{equation*}
\int_a^b (x-a)^p(b-x)^q f(x)dx = (b-a)^{p+q+1} \beta(p+1, q+1) \left(\max\{\left|f(a)\right|^l, \left|f(b)\right|^l\}\right)^{\frac{1}{l}}
\end{equation*}
\end{thm}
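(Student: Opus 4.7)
The plan is to reduce the integral to the standard beta-type form on $[0,1]$, then apply the power-mean (weighted Hölder) inequality with the weight $t^p(1-t)^q$, and finally invoke the quasi-convexity hypothesis on $|f|^l$.

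First I would perform the affine substitution $x = (1-t)a + tb$, $dx = (b-a)\,dt$, which sends $(x-a) = t(b-a)$ and $(b-x) = (1-t)(b-a)$. This rewrites
\begin{equation*}
\int_a^b (x-a)^p(b-x)^q f(x)\,dx = (b-a)^{p+q+1}\int_0^1 t^p(1-t)^q f\bigl((1-t)a+tb\bigr)\,dt,
\end{equation*}
so the task reduces to estimating the integral on $[0,1]$ in absolute value. Note that $\int_0^1 t^p(1-t)^q\,dt = \beta(p+1,q+1)$ will be the recurring constant.

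Next, since $l \geq 1$, I would split the integrand as $t^p(1-t)^q \cdot |f((1-t)a+tb)|$ and view $t^p(1-t)^q$ as a weight. Writing the weight as $[t^p(1-t)^q]^{1-1/l}\cdot[t^p(1-t)^q]^{1/l}$ and applying Hölder's inequality with conjugate exponents $l/(l-1)$ and $l$ (equivalently, the power-mean inequality for the probability measure $t^p(1-t)^q\,dt/\beta(p+1,q+1)$) gives
\begin{equation*}
\int_0^1 t^p(1-t)^q |f((1-t)a+tb)|\,dt \leq \bigl[\beta(p+1,q+1)\bigr]^{1-1/l}\!\left(\int_0^1 t^p(1-t)^q|f((1-t)a+tb)|^l\,dt\right)^{\!1/l}.
\end{equation*}
(The case $l=1$ is trivial and needs no Hölder step.)

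Now the quasi-convexity of $|f|^l$ enters: for every $t\in[0,1]$,
\begin{equation*}
|f((1-t)a+tb)|^l \leq \max\bigl\{|f(a)|^l,|f(b)|^l\bigr\},
\end{equation*}
so the inner integral is bounded by $\beta(p+1,q+1)\max\{|f(a)|^l,|f(b)|^l\}$. Substituting this bound, the exponents $1-1/l$ and $1/l$ on $\beta(p+1,q+1)$ combine to $1$, yielding
\begin{equation*}
\int_0^1 t^p(1-t)^q|f((1-t)a+tb)|\,dt \leq \beta(p+1,q+1)\bigl(\max\{|f(a)|^l,|f(b)|^l\}\bigr)^{1/l},
\end{equation*}
and multiplying by $(b-a)^{p+q+1}$ delivers the claim.

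The only conceptual obstacle is choosing the right splitting of the weight $t^p(1-t)^q$ inside Hölder's inequality so that the beta exponents collapse cleanly; once the weight is distributed as $1-1/l$ and $1/l$ between the two factors, everything is routine. The boundary case $l=1$ should be verified separately, but it just reduces to the direct estimate $|f|\le \max\{|f(a)|,|f(b)|\}$ and integration of the pure beta kernel.
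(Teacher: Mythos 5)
Your argument is correct and follows essentially the same route the paper takes: the statement is quoted from Liu, and the paper's own proof of its generalization (Theorem \ref{th3}) uses exactly your steps --- the substitution of Lemma \ref{l1}, the power-mean form of H\"{o}lder's inequality with the weight $(1-t)^p t^q$ split into exponents $1-\frac{1}{l}$ and $\frac{1}{l}$, and the pointwise bound on $|f|^l$ (with quasi-convexity replaced there by membership in ${K_{m,1}^{\alpha,s}}$). You also correctly read the ``$=$'' in the displayed statement as ``$\leq$'', which is evidently a typographical slip carried through Theorems \ref{T1}--\ref{T3}.
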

That is, this study is a further continuation and generalization of ~\cite{r6a} and ~\cite{r8} via generalized convexity.
\section{Main Results}\label{Sec 1}
In this section, we generalize the above theorems and produced some more results using the following lemma described in ~\cite{r10}.
\begin{lem}\label{l1}
Let $f : [a, b] \subset [0,\infty) \rightarrow \mathbb{R}$ be continuous on $[a, b]$ such that $f \in L([a, b])$, with $a < b$. Then the equality
\begin{equation}\label{le1}
    \int_a^b (x-a)^p(b-x)^q f(x)dx = (b-a)^{p+q+1} \int_0^1 (1-t)^p t^q f(ta + (1-t)b)dt
\end{equation}
holds for some fixed $p, q > 0$.
\end{lem}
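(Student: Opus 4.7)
The proof of Lemma \ref{l1} reduces to a single substitution, so my plan is to execute that change of variable cleanly and track the boundary terms and Jacobian, rather than do anything conceptually subtle.

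The natural substitution is $x = ta + (1-t)b$, which maps $t \in [0,1]$ onto $[a,b]$ bijectively. First I would record the three pieces of bookkeeping that this change of variable produces: the differential $dx = -(b-a)\,dt$; the factor $x - a = (1-t)(b-a)$ so that $(x-a)^p = (1-t)^p(b-a)^p$; and the factor $b - x = t(b-a)$ so that $(b-x)^q = t^q (b-a)^q$. I would also note the orientation reversal in the limits: $x = a$ corresponds to $t = 1$ and $x = b$ to $t = 0$.

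Next I would plug these into the left-hand side of \eqref{le1}. Collecting the $(b-a)$ factors gives an overall $(b-a)^{p+q+1}$, and the sign produced by $dx = -(b-a)\,dt$ is absorbed by swapping the limits from $\int_1^0$ back to $\int_0^1$. What remains inside is exactly $(1-t)^p t^q f(ta + (1-t)b)$, which gives the right-hand side of \eqref{le1}.

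The only thing to check for rigor is that the substitution is legitimate: continuity of $f$ on $[a,b]$ together with $f \in L([a,b])$ (both given in the hypotheses) ensures the integrand on the left is integrable, and the affine map $t \mapsto ta+(1-t)b$ is $C^1$ with nonzero derivative, so the standard change-of-variables formula applies without issue. I do not anticipate any real obstacle here; the lemma is an identity of computation rather than of content, and the hypothesis $a < b$ guarantees $(b-a)^{p+q+1}$ is well defined for arbitrary fixed $p, q > 0$.
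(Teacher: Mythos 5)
Your substitution argument is correct and complete: the affine change of variable $x = ta+(1-t)b$ with the bookkeeping you record ($x-a=(1-t)(b-a)$, $b-x=t(b-a)$, $dx=-(b-a)\,dt$, and the limit swap) yields \eqref{le1} exactly. The paper itself gives no proof of Lemma \ref{l1}, simply citing \cite{r10}, and your derivation is precisely the standard argument used there, so there is nothing to add.
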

To prove our main results, we follow the following definitions first defined in ~\cite{r8} by M. Muddassar et. al., named as $s-(\alpha, m)$-convex functions as reproduced below;
\begin{definition}
A function $f : [0, \infty) \rightarrow [0, \infty)$ is said to be $s-(\alpha, m)$-convex function in the first sense or f belongs to the class ${K_{m, 1}^{\alpha, s}}$ , if for all $x, y \in [0, \infty)$ and $\mu \in  [0, 1]$, the following inequality holds:
\begin{equation*}
    f(\mu x + (1-\mu) y) \leq \left({\mu^\alpha}^s \right) f(x) + m \left(1-{\mu^\alpha}^s \right) f\left(\frac{y}{m}\right)
\end{equation*}
where $(\alpha,m) \in [0,1]^2$ and for some fixed $s \in (0, 1]$.
\end{definition}
\begin{definition}
A function $f : [0, \infty) \rightarrow [0, \infty)$ is said to be $s-(\alpha, m)$-convex function in the second sense or f belongs to the class ${K_{m, 2}^{\alpha, s}}$ , if for all $x, y \in [0, \infty)$ and $\mu, \nu \in  [0, 1]$, the following inequality holds:
\begin{equation*}
    f(\mu x + (1-\mu) y) \leq \left(\mu^\alpha \right)^s f(x) + m \left(1-\mu^\alpha \right)^s f\left(\frac{y}{m}\right)
\end{equation*}
where $(\alpha,m) \in [0,1]^2$ and for some fixed $s \in (0, 1]$.
\end{definition}
Note that for $s=1$, we get $K_m^\alpha(I)$ class of convex functions and for $\alpha=1$ and $m=1$, we get $K_s^1(I)$ and $K_s^2(I)$ class of convex functions.
\begin{thm}\label{th1}
Let $f : [a, b] \rightarrow \mathbb{R}$ be continuous on $[a, b]$ such that $f \in L([a, b])$, with $0 \leq a < b < \infty$. If $|f|$ belongs to the class ${K_{m, 1}^{\alpha, s}}$ on $[a, b]$, then for some fixed $p, q > 0$, we have
\begin{eqnarray}\label{te1}
&&\nonumber\!\!\!\!\!\!\!\!\!\!\!\!\!\!\!\int_a^b \!\!(x-a)^p(b-x)^q f(x)dx\! \leq \! \left(b-a\right)^{p+q+1}\!\left\{\beta(q+\alpha s+1,p+1)\!\left(\!\left|f(a)\right|\!-\!m\left|f\left(\!\frac{b}{m}\!\right)\right|\!\right)\right.\\
&& \indent\indent\indent\indent\indent\indent\indent\indent\indent\indent\indent\indent\indent\indent\indent \left. +m\beta(q+1, p+1)\left|f\left(\frac{b}{m}\right)\right|\right\}
\end{eqnarray}
\end{thm}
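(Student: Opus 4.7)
The plan is to reduce Theorem \ref{th1} to a one-variable integral via Lemma \ref{l1} and then invoke the defining inequality of the class ${K_{m, 1}^{\alpha, s}}$ to pointwise bound the integrand. Concretely, I would first apply \eqref{le1} to rewrite
\[
\int_a^b (x-a)^p(b-x)^q f(x)\,dx = (b-a)^{p+q+1}\int_0^1 (1-t)^p t^q f(ta+(1-t)b)\,dt,
\]
and then pass the absolute value inside the integral to get an upper bound involving $|f(ta+(1-t)b)|$ against the nonnegative weight $(1-t)^p t^q$.

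The next step is the key use of the hypothesis. Since $|f|\in K_{m,1}^{\alpha,s}$, I would apply the first-sense definition with $x=a$, $y=b$, and $\mu=t$ to obtain the pointwise bound
\[
|f(ta+(1-t)b)| \;\leq\; t^{\alpha s}\,|f(a)| + m\bigl(1-t^{\alpha s}\bigr)\left|f\!\left(\tfrac{b}{m}\right)\right|.
\]
Substituting this into the integral and grouping by the coefficients $|f(a)|$ and $m|f(b/m)|$ yields a linear combination of $\int_0^1 (1-t)^p t^{q+\alpha s}\,dt$ and $\int_0^1 (1-t)^p t^q\,dt$.

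Finally, I would identify these two integrals as Euler Beta values:
\[
\int_0^1 (1-t)^p t^{q+\alpha s}\,dt = \beta(q+\alpha s+1,p+1),\qquad \int_0^1 (1-t)^p t^q\,dt = \beta(q+1,p+1),
\]
and collect terms. The coefficient of $\beta(q+\alpha s+1,p+1)$ becomes $|f(a)|-m|f(b/m)|$ (from the $t^{\alpha s}|f(a)|$ contribution and the negative part of $m(1-t^{\alpha s})|f(b/m)|$), while $m|f(b/m)|\,\beta(q+1,p+1)$ comes from the constant part. Multiplying back by $(b-a)^{p+q+1}$ reproduces \eqref{te1}.

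There is no real obstacle here: the argument is essentially a pointwise estimate followed by term-by-term integration. The only point that deserves care is bookkeeping the exponents on $t$ and $1-t$ when matching against the standard form $\beta(u,v)=\int_0^1 t^{u-1}(1-t)^{v-1}dt$, so that $q+\alpha s$ and $q$ produce the correct first arguments $q+\alpha s+1$ and $q+1$, and $p$ produces the common second argument $p+1$.
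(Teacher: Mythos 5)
Your proposal is correct and follows essentially the same route as the paper: apply Lemma \ref{l1}, move the absolute value inside, invoke the defining inequality of $K_{m,1}^{\alpha,s}$ with $\mu=t$, and evaluate the resulting two weighted integrals as Beta functions before regrouping the coefficients. In fact your bookkeeping is slightly cleaner than the paper's, which writes $|f(b)|$ instead of $\left|f\left(\frac{b}{m}\right)\right|$ in its intermediate step (\ref{t1b}); your version with $\left|f\left(\frac{b}{m}\right)\right|$ is the one consistent with the definition and with the stated conclusion (\ref{te1}).
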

\begin{proof}
Taking absolute value of Lemma \ref{l1}, we have
\begin{equation}\label{t1a}
    \int_a^b (x-a)^p(b-x)^q f(x)dx \leq (b-a)^{p+q+1} \int_0^1 (1-t)^p t^q \left|f(ta + (1-t)b)\right|dt
\end{equation}
Since $|f|$ belongs to the class ${K_{m, 1}^{\alpha, s}}$ on $[a, b]$, then the integral on the right side of inequality (\ref{t1a}) can be written as
\begin{equation}\label{t1b}
    \int_0^1 (1-t)^p t^q \left|f(ta + (1-t)b)\right|dt \leq \int_0^1 (1-t)^p t^q \left(t^{\alpha s}|f(a)| + m(1-t^{\alpha s})|f(b)|\right)dt
\end{equation}
Now Here,
\begin{equation}\label{t1c}
    \int_0^1 (1-t)^p t^{q+\alpha s} dt= \beta(q+\alpha s+1, p+1)
\end{equation}
and
\begin{equation}\label{t1d}
    \int_0^1 (1-t)^p t^q (1-t^{\alpha s}) dt= \beta(q+1, p+1) - \beta(q+\alpha s+1, p+1)
\end{equation}
Using (\ref{t1b}), (\ref{t1c}) and (\ref{t1d}) in (\ref{t1a}) and doing some algebraic operations, we get (\ref{te1}).
\end{proof}
\begin{thm}\label{th2}
Let $f : [a, b] \rightarrow \mathbb{R}$ be continuous on $[a, b]$ such that $f \in L([a, b])$, with $0 \leq a < b < \infty$ and let $k > 1$. If $|f|^{\frac{k}{k-1}}$ belongs to the class ${K_{m, 1}^{\alpha, s}}$ on $[a, b]$, then for some fixed $p, q > 0$, we have
\begin{eqnarray}\label{te2}
&&\nonumber\!\!\!\!\!\!\!\!\!\!\!\!\!\!\!\int_a^b (x-a)^p(b-x)^q f(x)dx \leq  \left(b-a\right)^{p+q+1} \left(\beta(\alpha s+1, 1)\right)^{\frac{k-1}{k}}\left(\beta(qk+1, pk+1)\right)^{\frac{1}{k}} \\
&&\indent\indent\indent\indent\indent\indent\indent\indent\indent\indent\indent\indent\indent\indent\indent \left[\left|f(a)\right|^{\frac{k}{k-1}}+m\left|f\left(\frac{b}{m}\right)\right|^{\frac{k}{k-1}}\right]^{\frac{k-1}{k}}
\end{eqnarray}
\end{thm}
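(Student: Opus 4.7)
The plan is to follow the same template as Theorem~\ref{th1} but insert H\"older's inequality to accommodate the power $|f|^{k/(k-1)}$. Starting from Lemma~\ref{l1} and passing to absolute values inside the integral, one is led to estimate $\int_0^1 (1-t)^p t^q |f(ta+(1-t)b)|\,dt$. First I would apply H\"older's inequality with conjugate exponents $k$ and $k/(k-1)$ to split this as
\[
\left(\int_0^1 (1-t)^{pk} t^{qk}\,dt\right)^{1/k}\left(\int_0^1 |f(ta+(1-t)b)|^{k/(k-1)}\,dt\right)^{(k-1)/k}.
\]
The first factor evaluates immediately to $\beta(qk+1,pk+1)^{1/k}$, which already produces one of the two beta factors appearing in (\ref{te2}).

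Next I would invoke the hypothesis $|f|^{k/(k-1)} \in K_{m,1}^{\alpha,s}$ with $x=a$, $y=b$ and $\mu=t$ to obtain
\[
|f(ta+(1-t)b)|^{\frac{k}{k-1}} \leq t^{\alpha s}|f(a)|^{\frac{k}{k-1}} + m(1-t^{\alpha s})\left|f\!\left(\tfrac{b}{m}\right)\right|^{\frac{k}{k-1}}.
\]
Integrating termwise over $[0,1]$, using $\int_0^1 t^{\alpha s}\,dt = 1/(\alpha s+1)=\beta(\alpha s+1,1)$ and $\int_0^1(1-t^{\alpha s})\,dt = 1-\beta(\alpha s+1,1)$, yields the bound
\[
\int_0^1 |f(ta+(1-t)b)|^{\frac{k}{k-1}}\,dt \leq \beta(\alpha s+1,1)|f(a)|^{\frac{k}{k-1}} + m\bigl(1-\beta(\alpha s+1,1)\bigr)\left|f\!\left(\tfrac{b}{m}\right)\right|^{\frac{k}{k-1}}.
\]

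To match the symmetric form appearing in (\ref{te2}), I would observe that since $(\alpha,m)\in[0,1]^2$ and $s\in(0,1]$ we have $\alpha s\le 1$, so $\beta(\alpha s+1,1)=1/(\alpha s+1)\ge 1/2 \ge 1-\beta(\alpha s+1,1)$. This lets me absorb the coefficient of $|f(b/m)|^{k/(k-1)}$ into $\beta(\alpha s+1,1)$, producing the single factor $\beta(\alpha s+1,1)\bigl[|f(a)|^{k/(k-1)}+m|f(b/m)|^{k/(k-1)}\bigr]$. Raising to the $(k-1)/k$ power, then multiplying by the first H\"older factor and the prefactor $(b-a)^{p+q+1}$ from Lemma~\ref{l1}, gives (\ref{te2}). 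The only delicate point, rather than a genuine obstacle, is this last absorption step; everything else is a routine application of H\"older's inequality and evaluation of standard beta integrals.
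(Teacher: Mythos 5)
Your proof follows essentially the same route as the paper's: H\"older's inequality with exponents $k$ and $k/(k-1)$ applied to $\int_0^1(1-t)^pt^q|f(ta+(1-t)b)|\,dt$, evaluation of $\int_0^1(1-t)^{pk}t^{qk}\,dt=\beta(qk+1,pk+1)$, and the $K_{m,1}^{\alpha,s}$ hypothesis applied to $|f|^{k/(k-1)}$. In fact you are more careful than the paper, which silently jumps from the bound $\beta(\alpha s+1,1)|f(a)|^{k/(k-1)}+m\bigl(1-\beta(\alpha s+1,1)\bigr)|f(b/m)|^{k/(k-1)}$ to the symmetric form in (\ref{te2}); your explicit observation that $\alpha s\le 1$ forces $1-\beta(\alpha s+1,1)\le\beta(\alpha s+1,1)$ is exactly the absorption step needed to justify that passage.
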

\begin{proof}
Applying the H\"{o}lder's Integral Inequality on the integral on the rightside of (\ref{t1a}), we get
\begin{eqnarray}\label{t2a}
&&\nonumber\!\!\!\!\!\!\!\!\!\!\!\!\!\!\!\int_0^1 (1-t)^p t^q \left|f(ta + (1-t)b)\right|dt \leq \left[\int_0^1 \left((1-t)^p t^q\right)^k dt\right]^{\frac{1}{k}}\times\\
&&\indent\indent\indent\indent\indent\indent\indent\indent\indent\indent\indent\indent \left[\int_0^1 \left|f(ta + (1-t)b)\right|^{\frac{k}{k-1}} dt\right]^{1-\frac{1}{k}}
\end{eqnarray}
Here,
\begin{equation}\label{t2b}
    \int_0^1 (1-t)^{pk} t^{qk} dt= \beta(qk+1, pk+1)
\end{equation}
Since $|f|^{\frac{k}{k-1}}$ belongs to the class ${K_{m, 1}^{\alpha, s}}$ on $[a, b]$ for $k > 1$, we have
\begin{equation}\label{t2c}
    \int_0^1 \left|f(ta + (1-t)b)\right|^{\frac{k}{k-1}} dt \leq \int_0^1 \left(t^{\alpha s}|f(a)|^{\frac{k}{k-1}} + m(1-t^{\alpha s})|f(b)|^{\frac{k}{k-1}} \right) dt
\end{equation}
furthermore,
\begin{equation}\label{t2d}
    \int_0^1 t^{\alpha s}dt=\int_0^1 (1-t)^{\alpha s}dt= \beta(\alpha s+1, 1)
\end{equation}
Inequalities (\ref{t1a}), (\ref{t2a}), (\ref{t2c}) and Equations (\ref{t2b}),(\ref{t2d}) together implies (\ref{te2}).
\end{proof}
\begin{thm}\label{th3}
Let $f : [a, b] \rightarrow \mathbb{R}$ be continuous on $[a, b]$ such that $f \in L([a, b])$, with $0 \leq a < b < \infty$ and let $l \geq 1$. If $|f|^l$ belongs to the class ${K_{m, 1}^{\alpha, s}}$ on $[a, b]$, then for some fixed $p, q > 0$, we have
\begin{eqnarray}\label{te3}
&&\nonumber\!\!\!\!\!\!\!\!\!\!\!\!\!\!\!\int_a^b (x-a)^p(b-x)^q f(x)dx \leq  \left(b-a\right)^{p+q+1} \left(\beta(q+1, p+1)\right)^{\frac{l-1}{l}}\left[\beta (q+\alpha s+1, p+1) \right.\\
&&\indent\indent\indent\indent\indent\indent\indent\indent \left.\left\{\!\left|f(a)\right|^l\!-\!m\left|\!f\left(\!\frac{b}{m}\!\right)\!\right|^l\!\right\}\!+m\beta(q\!+\!1, p\!+\!1)\!\left|\!f\left(\!\frac{b}{m}\!\right)\!\right|^l\!\right]^{\frac{1}{l}}
\end{eqnarray}
\end{thm}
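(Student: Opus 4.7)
The plan is to follow the template of Theorems \ref{th1} and \ref{th2}, starting from Lemma \ref{l1} and inequality (\ref{t1a}), but this time applying the power-mean inequality (the standard substitute for H\"older's when the exponent is $l \geq 1$ rather than conjugate pair) to the weight $(1-t)^p t^q$. Specifically, I would write
$(1-t)^p t^q = \bigl[(1-t)^p t^q\bigr]^{1-1/l}\bigl[(1-t)^p t^q\bigr]^{1/l}$
and apply H\"older with conjugate exponents $l/(l-1)$ and $l$ to deduce
\begin{equation*}
\int_0^1 (1-t)^p t^q |f(ta+(1-t)b)|\,dt \leq \left(\int_0^1 (1-t)^p t^q dt\right)^{\frac{l-1}{l}} \left(\int_0^1 (1-t)^p t^q |f(ta+(1-t)b)|^l dt\right)^{\frac{1}{l}}.
\end{equation*}
The first factor is simply $\beta(q+1,p+1)^{(l-1)/l}$, which accounts for the leading Beta factor in (\ref{te3}).

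Next I would expand the second factor. Because $|f|^l$ belongs to $K_{m,1}^{\alpha,s}$, the same bound used in (\ref{t1b}) applies to $|f|^l$:
\begin{equation*}
|f(ta+(1-t)b)|^l \leq t^{\alpha s}|f(a)|^l + m(1-t^{\alpha s})\left|f\!\left(\tfrac{b}{m}\right)\right|^l.
\end{equation*}
Multiplying by $(1-t)^p t^q$ and integrating over $[0,1]$, the integrals reduce to the Beta evaluations already obtained in (\ref{t1c}) and (\ref{t1d}). Collecting terms gives
\begin{equation*}
\beta(q+\alpha s+1,p+1)|f(a)|^l + m\bigl[\beta(q+1,p+1)-\beta(q+\alpha s+1,p+1)\bigr]\left|f\!\left(\tfrac{b}{m}\right)\right|^l,
\end{equation*}
which rearranges to exactly the bracket in (\ref{te3}), namely $\beta(q+\alpha s+1,p+1)\bigl\{|f(a)|^l-m|f(b/m)|^l\bigr\} + m\beta(q+1,p+1)|f(b/m)|^l$.

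Raising this to the $1/l$ power and combining with the $\beta(q+1,p+1)^{(l-1)/l}$ factor, and then multiplying through by $(b-a)^{p+q+1}$ via (\ref{t1a}), yields (\ref{te3}). There is no real obstacle here: the only choice point is switching from H\"older in the form used in Theorem \ref{th2} (which required $k>1$ and produced the conjugate exponent $k/(k-1)$) to the power-mean form appropriate for $l\geq 1$. The Beta-function bookkeeping is identical to what was done in the proofs of Theorems \ref{th1} and \ref{th2}, and the case $l=1$ recovers Theorem \ref{th1} as a sanity check.
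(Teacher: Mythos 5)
Your proposal matches the paper's proof essentially step for step: the paper also applies H\"older's inequality in the power-mean form with the weight split as $\bigl[(1-t)^p t^q\bigr]^{1-1/l}\bigl[(1-t)^p t^q\bigr]^{1/l}$, evaluates the first factor as $\beta(q+1,p+1)^{(l-1)/l}$, and bounds the second factor using the $K_{m,1}^{\alpha,s}$ property of $|f|^l$ together with the same Beta-function identities from Theorem \ref{th1}. Your write-up is in fact slightly more explicit than the paper's, which stops at the analogue of your convexity bound and declares the proof complete.
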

\begin{proof}
Now applying the H\"{o}lder's Integral Inequality on the integral on the rightside of (\ref{t1a}) in anotherway, we get
\begin{eqnarray}\label{t3a}
&&\nonumber\!\!\!\!\!\!\!\!\!\!\!\!\!\!\!\int_0^1 (1-t)^p t^q \left|f(ta + (1-t)b)\right|dt \leq \left[\int_0^1 (1-t)^p t^q dt\right]^{1-\frac{1}{l}}\times\\
&&\indent\indent\indent\indent\indent\indent\indent\indent\indent\indent\indent\indent \left[\int_0^1 (1-t)^p t^q \left|f(ta + (1-t)b)\right|^l dt\right]^{\frac{1}{l}}
\end{eqnarray}
Here,
\begin{equation}\label{t3b}
    \int_0^1 (1-t)^p t^q dt= \beta(q+1, p+1)
\end{equation}
Since $|f|^l$ belongs to the class ${K_{m, 1}^{\alpha, s}}$ on $[a, b]$ for $l \geq 1$, we have
\begin{equation}\label{t2c}
    \!\!\!\! \int_0^1\!\! (1-t)^p t^q\! \left|f(ta\! +\! (1\!-\!t)b)\right|^l\! dt\!\leq \!\int_0^1\!\!\! (1\!-\!t)^p t^q \left(t^{\alpha s}|f(a)|^l\! +\! m(1-t^{\alpha s})|f(b)|^l \right)\! dt
\end{equation}
which completes the proof.
\end{proof}\\
Some more integral inequalities can be found using ${K_{m, 2}^{\alpha, s}}$ class of convex functions.

\end{document}